\documentclass[12pt]{amsart}
\usepackage[margin=3cm]{geometry}
\usepackage{amsmath,amssymb,amsbsy}
\usepackage[english]{babel}
\usepackage{todonotes}
\usepackage{comment}
\usepackage{url}
\usepackage[colorlinks,linkcolor=blue,anchorcolor=blue,citecolor=blue,backref=page]{hyperref}
\usepackage{enumitem}

\newcommand{\pdiv}{\mid\!\mid}

\numberwithin{equation}{section}

\setlength{\marginparwidth}{2.1cm}
\setlength{\oddsidemargin}{0cm}
\setlength{\evensidemargin}{0cm}
\textwidth=455pt\oddsidemargin=0pt

\newtheorem{theorem}{Theorem}[section]
\newtheorem{lemma}[theorem]{Lemma}
\newtheorem{proposition}[theorem]{Proposition}

\newcommand{\RR}{\operatorname{R}}
\newcommand{\M}{\operatorname{M}}
\newcommand{\E}{\operatorname{S}}

\newcommand{\N}{\mathbb N}
\newcommand{\Z}{\mathbb Z}
\newcommand{\R}{\mathbb R}
\newcommand{\seq}{\subseteq}

\title{On products of sets of natural density one}

\author{Sandro Bettin}
\address{DIMA - Dipartimento di Matematica, Via Dodecaneso, 35, 16146 Genova, Italy}
\email{sandro.bettin@unige.it}

\author{Matteo Bordignon}
\address{Department of Mathematics, KTH, SE-100 44 Stockholm, Sweden}
\email{bordig@kth.se}

\author{Alessandro Fazzari}
\address{D\'epartement de math\'ematiques et de statistique, Universit\'e de Montr\'eal. CP 6128, succ. Centre-ville. Montreal, QC H3C 3J7, Canada}
\email{alessandro.fazzari@umontreal.ca}

\subjclass[2020]{11B05 (primary)}

\date{\today}

\begin{document}
\begin{abstract} 
In a previous work, Bettin, Koukoulopoulos, and Sanna prove that if two sets of natural numbers $A$ and $B$ have natural density $1$, then their product set $A \cdot B := \{ab : a \in A, b \in B\}$ also has natural density $1$. They also provide an effective rate and pose the question of determining the optimal rate. We make progress on this question by constructing a set $A$ of density 1 such that $A\cdot A$ has a ``large'' complement.
\end{abstract}

\maketitle

\section{Introduction}
The study of product sets $A \cdot B := \{ab : a \in A, b \in B\}$ of two sets of natural numbers $A$ and $B$ has long been of interest in mathematics. For finite sets, the classic multiplication table problem, posed by Erd\"{o}s~\cite{Erdos1, Erdos2}, seeks bounds on the cardinality of the $n\times n$ multiplication table. This problem was fully resolved by Ford~\cite{Ford}, building on earlier work by Tenenbaum~\cite{Tenenbaum1987}. A multidimensional variation was later studied by Koukoulopoulos~\cite{Koukoulopoulos}.
For more general finite sets, the cardinality problem has been investigated by Cilleruelo, Ramana, and Ramaré~\cite{CRR}, as well as by Mastrostefano~\cite{Mastrostefano} and Sanna~\cite{Sanna}.

The analogous problem for infinite sets of natural numbers was considered by Hegyvári, Hennecart, and Pach~\cite{HHP}. In this context, the role of cardinality is played by the natural density $d(A):= \lim_{x\rightarrow \infty } \frac{\# (A \cap [1,x])}{x}$ of a set $A$, if the limit exists.  
Hegyvári, Hennecart, and Pach asked whether, given two sets $A,B$ with density $1$, the product set $A \cdot B$ also has density $1$.

In~\cite{BKS}, Bettin, Koukoulopoulos, and Sanna answered this question in the affirmative. In other words, defining  
\begin{align}\label{rfrx}
\RR_x(A) := 1 - \frac{\# (A \cap [1,x])}{x}
\end{align}
for any $A \subseteq \mathbb{N}$ and $x \geq 1$, they proved that if $\RR_x(A),\RR_x(B) \to 0$ as $x \to \infty$, then also $\RR_x(A \cdot B) \to 0$. 
In the same paper, it was also remarked that one could obtain an explicit rate of convergence for $\RR_x(A \cdot A)$ in terms of the rate of $\RR_x(A)$. More specifically, their proof (cf.~\cite[Remark, p.1411]{BKS}) gives that if  
$$\RR_x(A)\ll (\log x)^{-a} \quad \text{ for some } a\in(0,1),$$
then 
\begin{align*}
    \RR_x(A\cdot A)\ll (\log x)^{-\frac{a^2}{1+a}+o(1)}.
\end{align*}
Equivalently, letting
\begin{align*}
\psi(a)&:=\sup \Big\{b\in\R_{>0}\ \Big|\ \RR_{x}(A\cdot A)\ll  (\log x)^{-b} \ \forall A\seq\N\text{ s.t. }\RR_x(A)\ll(\log x)^{-a}\Big\}\\
&\hphantom{:}=\inf_{A\seq \mathbb N \atop \RR_x(A)\ll (\log x)^{-a}\hspace{-1.em}}\big\{b\in\R_{>0}\ \big|\  \RR_x(A\cdot A)=\Omega( (\log x)^{-b} )\big\}
\end{align*}
for $a>0$, the result of~\cite{BKS} establishes that $\psi(a) \geq a^2/(1+a)$ for $a\in(0,1)$. In this note, we aim to make progress on determining the function $\psi(a)$ by providing an upper bound.
It is easy to see that $\psi(a)\leq a$ for all $a\in(0,1)$. Indeed, let us denote by $P$ a subset of the primes with relative asymptotic density $a\in(0,1)$, i.e. $\symbol{35}(P\cap[1,x])\sim ax/\log x$. Then, letting 
$$A_P:=\{n\in\N\mid \exists p\in P\text{ s.t. $p|n$}\}\cup\{1\}$$ 
we have $A_P\cdot A_P=A_P$ and, by the Fundamental Lemma of Sieve Theory~\cite[Theorem 18.11]{DimitrisBook},
$$\RR_x(A_P)=\RR_x(A_P\cdot A_P)=\frac1x\#\{n\in[2,x]\mid (p,n)=1\, \forall p\in P\}
=(\log x)^{-a+o(1)}.$$
We improve upon this ``trivial'' bound for sufficiently small values of $a$. More specifically, we show the following.

\begin{theorem}\label{mathe}
For $a\in(0,1/4)$, $B\in  [0,\phi^{-1}(4a)]$, let 
\begin{align}\label{aewa}
W(a,B):=\frac{(\phi(B)-\sqrt{\phi(B)^2-4a \phi(B)})^2}{4\phi(B)}+a\frac{\phi(2B)}{\phi(B)}
\end{align}
where $\phi:[0,1]\to[0,1]$ is defined by 
\begin{align}\label{dfp}
\phi(x):=\begin{cases}
x\log x-x+1, & x\in(0,1),\\
1 &x=0.
\end{cases}\end{align}
Then, defining 
\begin{align}\label{aewa2}
K(a)&:= \min_{\substack{ B\in [0,\phi^{-1}(4a)]}}W(a,B),\qquad a\in(0,1/4), 
\end{align}
we have
\begin{align}\label{24Jan.7}
\psi(a)\leq K(a).
\end{align}
Moreover, one has
\begin{align}
&\label{thh.01}K(a)<a && \text{ for } a\in(0,0.11717],\\
&\label{thh.02}K(a)\le \frac{2a^2}{1-\log 2}+o(a^2) && \text{ as }a\to0^+. 
\end{align}
\end{theorem}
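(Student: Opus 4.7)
The plan is to exhibit, for each admissible $B \in [0, \phi^{-1}(4a)]$, an explicit set $A = A_{B,a} \seq \N$ with $\RR_x(A) \ll (\log x)^{-a}$ and $\RR_x(A \cdot A) \gg (\log x)^{-W(a,B)}$; the bound \eqref{24Jan.7} then follows by taking the infimum over $B$, and the refinements \eqref{thh.01}--\eqref{thh.02} by a direct analysis of $W$ at particular choices of $B$. A crucial feature of the construction is that $1 \notin A$: this allows $\RR_x(A \cdot A)$ to exceed $\RR_x(A)$, which is indispensable for beating the trivial upper bound $\psi(a) \leq a$ coming from $A_P$. The set $A$ combines two ingredients: (1) a multiplicatively-closed ``sieve'' part of the form $A_P$ for a set $P$ of primes of carefully chosen relative density, guaranteeing $\RR_x(A_P) \asymp (\log x)^{-q}$ where $q$ is the smaller root of $X^2 - \phi(B) X + a\phi(B) = 0$; (2) a ``threshold'' part $A'$ defined by a large-deviation condition on the number of prime factors lying outside $P$, of order $B\log\log n$. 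We set $A = (A_P \cup A') \setminus \{1\}$.

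For the complement of $A$, the two conditions combine so that after applying the Sathe--Selberg formula to the distribution of the restricted $\Omega$-function (together with Mertens-type estimates for the $P$-sieve) one obtains $\RR_x(A) \asymp (\log x)^{-a}$ after calibration. For the key lower bound on $\RR_x(A \cdot A)$, one observes that $A_P$ is multiplicatively closed and any product of an $A_P$-element with any other element of $A$ stays in $A_P$; hence $A \cdot A = A_P \cup (A' \cdot A')$ and $(A \cdot A)^c = A_P^c \cap (A' \cdot A')^c$. Two disjoint types of integers populate this complement, each contributing one term to $W(a,B)$: integers in $A_P^c$ whose restricted $\Omega$-value lies below the doubled threshold $2B\log\log n$ (and so cannot be written as a product of two $A'$-elements, each of which would need at least $B\log\log n$-many prime factors outside $P$) contribute the term $a\phi(2B)/\phi(B)$ via the Sathe--Selberg exponent $\phi(2B)$; integers in $A_P^c \cap A'$ near the threshold that additionally lack a small prime divisor enabling an unbalanced factorization into $A'$-elements contribute the term $q^2/\phi(B) = q-a$.

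The specific refinements are then consequences. For \eqref{thh.01}, one evaluates $\min_B W(a, B)$ numerically at $a = 0.11717$, verifies $K(a) < a$ at this threshold, and extends the inequality to all smaller $a$ by continuity. For \eqref{thh.02}, choose $B = 1/2$; then $\phi(2B) = \phi(1) = 0$, the second term of $W$ vanishes, and $W(a, 1/2) = q^2/\phi(1/2)$. A Taylor expansion of $q = (\phi(1/2) - \sqrt{\phi(1/2)^2 - 4a\phi(1/2)})/2 = a + a^2/\phi(1/2) + O(a^3)$ yields $W(a, 1/2) = a^2/\phi(1/2) + O(a^3) = 2a^2/(1 - \log 2) + o(a^2)$, as required.

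The main obstacle lies in the lower bound for $\RR_x(A \cdot A)$: one must verify that the two claimed contributions to $(A \cdot A)^c$ arise with the expected densities, without unwanted overlap or cancellation, via a case analysis over all nontrivial factorizations $n = a_1 a_2$. In particular, one separates the ``balanced'' regime $a_1 \asymp a_2 \asymp \sqrt{n}$ (controlled by the $\phi(2B)$-rate) from the ``unbalanced'' regime where one factor is bounded or very small (where the absence of a suitable small prime divisor becomes the binding obstruction, giving the $q^2/\phi(B)$ contribution). The analytic engine consists of uniform Sathe--Selberg estimates for the distribution of the restricted $\Omega$-function in large-deviation regimes, together with uniform sieve bounds for rough integers with prescribed restricted $\Omega$-values.
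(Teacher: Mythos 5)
Your construction does not work, and the failure is structural rather than a matter of detail. You set $A = (A_P \cup A')\setminus\{1\}$ where $A_P$ is the multiplicatively-closed sieve set of integers having a prime factor in a set $P$ of relative density $q$ (so $\RR_x(A_P)\asymp(\log x)^{-q}$). But then almost all of $A_P$ sits inside $A\cdot A$: if $n$ has at least two prime factors (with multiplicity) in $P$, say $p\mid n$, then $n = p\cdot(n/p)$ with $p\in A_P\setminus\{1\}\subseteq A$ and $n/p\in A_P\setminus\{1\}\subseteq A$, so $n\in A\cdot A$; and if $n$ has exactly one $P$-prime factor $p$, then generically $n/p\in A'$, so again $n\in A\cdot A$. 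Thus, up to a negligible exceptional set, $(A\cdot A)^c\subseteq A_P^c$, forcing $\RR_x(A\cdot A)\ll(\log x)^{-q+o(1)}$. On the other hand, since $q$ is the smaller root of $X^2-\phi(B)X+a\phi(B)=0$ one has $q^2/\phi(B)=q-a$, hence
\begin{align*}
W(a,B) \;=\; \frac{q^2}{\phi(B)}+a\,\frac{\phi(2B)}{\phi(B)}
\;=\; q - a\,\frac{\phi(B)-\phi(2B)}{\phi(B)} \;<\; q
\end{align*}
strictly, because $\phi$ is strictly decreasing on $[0,1]$. So you are claiming $\RR_x(A\cdot A)=\Omega\big((\log x)^{-W}\big)$ with $W<q$ while simultaneously $\RR_x(A\cdot A)\ll(\log x)^{-q}$, which is impossible for large $x$. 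The same problem infects the density of $A$ itself: you assert $\RR_x(A)\asymp(\log x)^{-a}$, but since $A\supseteq A_P\setminus\{1\}$ you automatically have $\RR_x(A)\ll(\log x)^{-q}=o\big((\log x)^{-a}\big)$, because $q>a$ (the product of the two roots is $a\phi(B)$ and the larger root is $<\phi(B)$).

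The moral, which the paper's construction is built around, is that $A$ must \emph{not} contain any multiplicatively-closed set of positive (logarithmic) codimension: any such set $S$ with $\RR_x(S)\asymp(\log x)^{-q}$ forces $\RR_x(A\cdot A)\lesssim(\log x)^{-q}$, which can only reproduce the trivial $\psi(a)\leq a$-type bound and never beat it. Instead, the paper defines $A$ directly by a local large-deviation condition ($\Omega_{k_n}(n)>\max\{1,BD_{k_n}\}$ on a carefully discretized scale), with no sieve component at all; multiplicativity is broken by design, and the gain comes from forcing every factorization $n=n_1n_2$ with $n_1,n_2\in A$ to contribute either extra prime factors in the top interval (balanced case, giving the $\phi(2B)$ term) or a nonempty tail of prime factors in lower intervals (unbalanced case, giving the $\phi(B)/y^2$ term), and ruling both out simultaneously for $n$ in the complement. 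A smaller conceptual point: your phrase ``two disjoint types of integers, each contributing one term to $W(a,B)$'' is misleading — for the exponent $W$ to be a \emph{sum} of two terms, a single $n\in(A\cdot A)^c$ must satisfy both conditions at once (so densities multiply and exponents add); a union of two disjoint families would give the \emph{minimum} of the exponents, not their sum. The computation at the end for \eqref{thh.02} (taking $B=1/2$, expanding $t_{a,1/2}$) is, however, the same as in the paper and is correct once the $\psi(a)\leq K(a)$ bound is in hand.
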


\begin{figure}[ht]
\includegraphics[width=10cm]{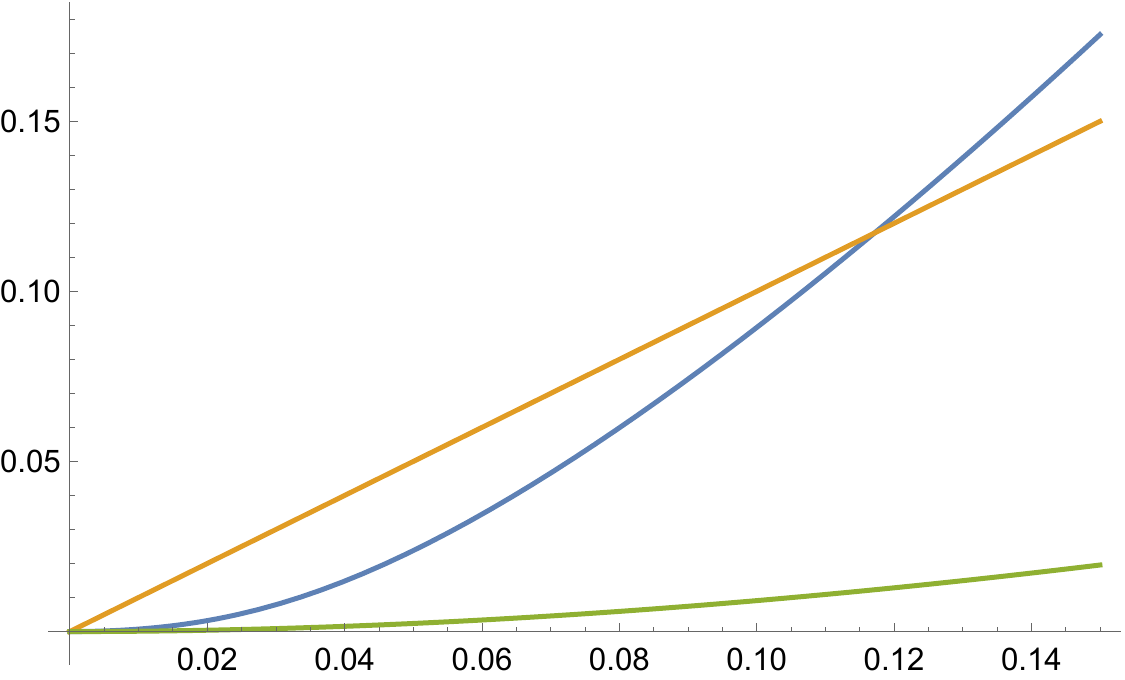}
\caption{The functions $\frac{a^2}{1+a}$ (green), $K(a)$ (blue) and $a$ (orange) for $0\leq a\leq0.15$. The function $\psi$ lies between the green curve and the minimum between the blue and the orange curves. }
\end{figure}

We conclude the Introduction by noting that, together with the bound of~\cite{BKS},~\eqref{thh.02} implies that $\psi(a)$ decays quadratically as $a\to0^+$. More precisely, 
$$1\leq\liminf_{a\to0^+}\frac{\psi(a)}{a^2}\leq \limsup_{a\to0^+}\frac{\psi(a)}{a^2}\leq \frac{2}{1-\log2}=6.51778\dots.$$

\subsection{Sketch of the argument}

The main idea used in~\cite{BKS} is as follows: any integer $n$ can be factorized as  
\[
n = n_{\text{smooth}} \cdot n_{\text{rough}},
\]  
where $n_{\text{smooth}}$ and $n_{\text{rough}}$ are the products of its ``small'' and ``large'' prime factors, respectively, with respect to a suitably chosen (small) cutoff. If $n \notin A \cdot A$, then at least one of these factors must be missing from $A$, meaning either $n_{\text{smooth}} \notin A$ or $n_{\text{rough}} \notin A$.  
If the product set $A \cdot A$ does not have density $1$, then $A$ must lack its expected proportion of either smooth or rough numbers. Consequently, $A$ itself cannot have density $1$.  

In the argument above, for any $n\notin A\cdot A$ one infers information about $A$ from a single factorization of $ n $, where $ n $ is written as a product of two integers. These integer, in addition to not both belonging to $ A $, also satisfy the extra condition of being respectively small and smooth, and large and rough. To construct a set $ A $ such that $ \mathbb{N} \setminus (A \cdot A) $ is large, we aim to define $ A $ in a way that naturally forces a typical integer $ m $ to have all its factorizations in $ A \cdot A $ constrained by this extra condition.

To achieve this, we define $A$ as the set of integers that do not have ``too few'' large prime divisors, i.e., we consider  
\[
A = \big\{ m : \Omega^*(m) > B \M(m) \big\},  
\]
where  
\[
\Omega^*(m) = \sum_{\substack{p^{\nu_p} \pdiv m \\ \exp(\delta \log\log m) < p \leq m}} \nu_p,\qquad \M(m) = (1-\delta)\log\log m.
\]  
Notice that $\M(m)$ is the expected average value of $\Omega^*(m)$. The parameters $\delta, B \in (0,1)$ are then chosen so that  
$
\RR_x(A) = (\log x)^{-a}.
$  

For any factorization $m = n_1 n_2$ with $n_1, n_2 \in A$ and $n_1 \leq n_2$, we have two possibilities:
\begin{itemize}
\item[(a)] If $n_1$ and $n_2$ are of comparable size, then  
$  \Omega^*(m) \gtrsim B ( \M(n_1) + \M(n_2) ) \approx 2B \M(m)$
\item[(b)] If $n_1$ is much smaller than $n_2$, then $m$ has at least $B \M(n_1)$ prime divisors smaller than $n_1$, and thus $m$ does not have too few prime divisors of such (small) size.  
\end{itemize}  

Both conditions on $m$ are stricter (in terms of asymptotic cardinality) than the condition in the definition of $A$. Despite $m$ needing to satisfy only one of these conditions (and in fact, condition (b) applies across all possible ranges of $n_1$), we obtain that  $\RR_x(A \cdot A)$ is larger than $ \RR_x(A)$.  
We then optimize the choice of $\delta$ and $B$ to maximize $\RR_x(A \cdot A)$.  

When making this argument rigorous, we need to make an additional refinement. Specifically, we modify $\Omega^*(m)$ to ``discretize'' the interval $(\exp(\delta \log\log m), m]$ in its definition. See~\eqref{sdl} for the precise definition of the set $A$. This adjustment is needed when handling all the possible range constraints in case (b).

\subsection{Notations}
Throughout the paper, we will employ the following standard notations. Given integers $a,b,$ and $m$, we write $a| b$ if $a$ divides $b$, and $a^m \!\pdiv\! b$ if $a^m$ divides $b$ exactly, i.e. $a^m|b$ and $a^{m+1}\nmid b$.
We also employ Landau’s notation $f= O(g)$ and Vinogradov’s notation $f \ll g$, both meaning that $|f|\leq C|g|$ for some constant $C > 0$. If the constant $C$ depends on some parameter $y$, we write $f=O_y(g)$ or $f\ll_y g$. The notation $f= o(g)$ as $x\to a$ means that $\lim_{x\to a} f(x)/g(x) = 0$. Finally, we write $f=\Omega_y(|g|)$ as $x\to a$ if there exist a constant $c=c(y)>0$ and a sequence $x_n\to a$ such that $|f(x_n)|\geq c|g(x_n)|$.

\subsection*{Acknowledgments} 
The authors wish to thank Michel Balazard, Andrew Granville, Tony Haddad, and Dimitris Koukoulopoulos for inspiring conversations.
Part of this work was completed while the authors were in residence at the Institut Mittag-Leffler in Djursholm Sweden during the spring semester of 2024, and is supported by the Swedish Research Council under grant no. 2021-06594. 
S.B. is partially supported by PRIN 2022 “The arithmetic of motives and L-functions”, by the Curiosity Driven grant “Value distribution of quantum modular forms” of the University of Genoa, funded by the European Union -- NextGenerationEU, and by the MIUR Excellence Department Project awarded to Dipartimento di Matematica, Università di Genova, CUP D33C23001110001. 
M.B. was partially supported by the Swedish Research Council (2020-04036).
A.F. is supported by the Fonds de recherche du Qu\'ebec - Nature et technologies, Projet de recherche en \'equipe 300951, and thanks P\"ar Kurlberg for the invitation to KTH, where part of this work was completed.
Finally, S.B. and A.F. are members of the INdAM group GNAMPA.

\section{Lemmata}

For any set of primes $S$, we denote
$$\E(x):= \sum_{\substack{p\leq x \\ p\in S}}\frac{1}{p},\qquad \Omega(n;S) := \sum_{\substack{p^{\nu_p} \pdiv n \\ p\in S}} \nu_p. $$

All the preliminary results stated in this section are manifestations of the Poissonian nature of the arithmetic function $\Omega(n;S)$.
The first one is a standard upper bound for the probability that a random integer $n$ has a limited number of prime divisors in an interval.

\begin{lemma}\label{NewLemma01}
Let $\phi$ be as in~\eqref{dfp}. Then, uniformly for $e<U<V\leq \log\log x$ and $B\in[0,1)$, we have 
$$\frac{1}{x}\sum_{\substack{n\leq x \atop \Omega(n;(U,V])\leq B\log\frac{\log V}{\log U}}}1 \ll \bigg(\frac{\log V}{\log U}\bigg)^{-\phi(B)} . $$ 
\end{lemma}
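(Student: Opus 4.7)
The plan is to apply Rankin's trick, i.e.\ a Chernoff-type upper bound, combined with a standard estimate for sums of multiplicative functions. Set $L := \log(\log V/\log U)$; by Mertens's theorem $L$ is (up to $O(1/\log U)=O(1)$) the expected value of $\Omega(n;(U,V])$ for $n$ drawn uniformly from $[1,x]$. For any $\lambda \in (0,1]$, the pointwise inequality $\mathbf{1}\{\Omega(n;(U,V]) \le BL\} \le \lambda^{\Omega(n;(U,V]) - BL}$ is trivial (the right-hand side is $\ge 1$ precisely when the exponent is $\le 0$, i.e.\ when the indicator equals $1$). Summing over $n \le x$ yields
\[
\#\{n \le x : \Omega(n;(U,V]) \le BL\} \le \lambda^{-BL}\sum_{n \le x}\lambda^{\Omega(n;(U,V])}.
\]

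The function $f_\lambda(n) := \lambda^{\Omega(n;(U,V])}$ is a non-negative multiplicative function bounded by $1$, with $f_\lambda(p^k)=\lambda^k$ for $p\in(U,V]$ and $f_\lambda(p^k)=1$ otherwise. A standard Hall--Tenenbaum / Wirsing upper bound for such sums gives
\[
\sum_{n \le x} f_\lambda(n) \ll x\prod_{U < p \le V}\frac{1-1/p}{1-\lambda/p} \ll x \exp\!\left(-(1-\lambda)\sum_{U < p \le V}\frac{1}{p}\right) \ll x\left(\frac{\log V}{\log U}\right)^{\lambda-1},
\]
where the final step uses Mertens's theorem in the form $\sum_{U<p\le V}1/p = L + O(1/\log U)$ (valid since $U>e$). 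Combining the last two displays,
\[
\#\{n \le x : \Omega(n;(U,V]) \le BL\} \ll x\left(\frac{\log V}{\log U}\right)^{\lambda - 1 - B\log\lambda}.
\]

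I then optimize the free parameter $\lambda \in (0,1]$. A short calculus exercise shows that $\lambda - 1 - B\log\lambda$ is minimized at $\lambda = B$ (for $B \in (0,1)$), with minimum value $B - 1 - B\log B = -\phi(B)$. Taking $\lambda = B$ gives exactly the bound claimed. The boundary case $B=0$, where $\phi(0)=1$, must be treated separately, e.g.\ via the fundamental lemma of sieve theory, which gives $\#\{n\le x : \Omega(n;(U,V])=0\} \ll x\log U/\log V$, matching the claimed exponent.

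The main obstacle I anticipate is keeping the multiplicative-function estimate uniform in $B$ (hence $\lambda$) and in $U,V$; however, since $f_\lambda$ is uniformly bounded by $1$, the standard Hall--Tenenbaum-type bound applies with an absolute implied constant. The assumptions $V\le\log\log x$ and $U>e$ play their usual role: the former ensures that all primes involved are extremely small relative to $x$ so the multiplicative sum is well approximated by its local Euler factors, while the latter controls the Mertens remainder $O(1/\log U)$.
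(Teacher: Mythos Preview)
Your argument is correct. It differs from the paper's proof, which simply cites Hal\'asz's sharp bounds for $\#\{n\le x:\Omega(n;(U,V])=k\}$ and sums over $k\le BL$. Your route is the more elementary one: rather than invoking the (harder) pointwise-in-$k$ estimates, you apply Rankin's trick directly to the cumulative count, reducing the problem to a mean value of the bounded multiplicative function $\lambda^{\Omega(n;(U,V])}$, for which a Hall--Tenenbaum/Shiu bound with an absolute constant suffices. The optimisation $\lambda=B$ then recovers exactly the exponent $-\phi(B)$. In effect you are carrying out, in streamlined form, the upper-bound half of Hal\'asz's method without the extra work needed to isolate individual level sets. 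The paper's version is shorter on the page but leans on a deeper citation; yours is self-contained.

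Two minor remarks. First, the separate treatment of $B=0$ is not really needed: since the implied constant in your Shiu/Hall--Tenenbaum step is uniform in $\lambda\in(0,1]$, one may let $\lambda\to0^+$ and recover the exponent $-1=-\phi(0)$ directly. Second, your explanation of the hypothesis $V\le\log\log x$ overstates its role: the multiplicative upper bound you use is already valid for, say, $V\le x$, so this hypothesis is present in the lemma only because that is the range of application, not because the proof requires it.
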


\begin{proof}
Hal\'asz~\cite{Halasz2} proved sharp bounds for integers $n\leq x$ with $\Omega(n;(U,V])=k$. To obtain the claimed result it suffices to sum over $k\leq B\log\frac{\log V}{\log U}$.
\end{proof}

Building on works of Hal\'asz~\cite{Halasz1, Halasz2}, S\'ark\"ozy \cite{Sarkozy} (see also~\cite{Balazard}, Theorem A and subsequent paragraphs on page 391) obtained a lower bound for the number of integers with $\Omega(n;S)=k$. We need a version of this (with $\Omega(n;S)\leq k$) where there are multiple conditions on the number of prime divisors in disjoint sets. This is obtained in Tenenbaum~\cite{Tenenbaum2017}, but only when $k$ is not too small. See also~\cite{Mangerel, Ford1,Tudesq} for some related results.  We provide a short proof of the precise result that we need by making simple modifications to~\cite{Halasz2}, being very brief in the steps that are essentially identical to Hal\'asz' work.
 
\begin{lemma}\label{NewLemma4}
Let $m\in\mathbb N$, $\underline k = (k_1,\dots,k_m)\in\mathbb N^m$, $\varepsilon>0$ and $x\geq1$. Let $S_1,\dots,S_m$ disjoint sets of primes. Then, for $A_\varepsilon$ large enough we have
$$ N(\underline k,x):=\sum_{\substack{n\leq x \\ \Omega(n;S_j) \leq k_j\; \; \forall j=1,\dots, m}} 1  
\gg  x\prod_{j=1}^{m}\frac{\E_j(x)^{k_j-1}}{(k_j-1)!}e^{-\E_j(x)} $$
uniformly in $x,\underline k$ satisfying $1 \leq k_j \leq (2-\varepsilon) \E_j(x)$ and $\E_j(x)\geq A_\varepsilon$ for all $j=1,\dots,m$.
\end{lemma}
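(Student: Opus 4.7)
The plan is to adapt Hal\'asz's lower bound technique from~\cite{Halasz2} to the multi-set setting. The disjointness of $S_1, \ldots, S_m$ is essential: it forces the arithmetic functions $\Omega(\cdot; S_j)$ to decouple, behaving like independent Poisson random variables with rates $\E_j(x)$. The target lower bound matches exactly the ``independent Poisson'' prediction for the event $\{\Omega(\cdot; S_j) = k_j - 1 \text{ for all } j\}$, so the strategy is to realize this decoupling concretely via a parametrization of $n$.

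First I would reduce to lower bounding $\#\{n \leq x : \Omega(n; S_j) = k_j - 1 \; \forall j\}$, which is a subset of the set counted by $N(\underline k,x)$. To obtain a further (only slightly weaker) lower bound, I restrict to \emph{squarefree} configurations: integers $n \leq x$ with $p^2 \nmid n$ for all $p \in S := \bigcup_j S_j$ and $\Omega(n; S_j) = k_j - 1$ for every $j$. Each such $n$ admits the unique decomposition
\begin{align*}
n = a \cdot \prod_{j=1}^m q_{j,1} q_{j,2} \cdots q_{j,k_j-1},
\end{align*}
where for each $j$ the primes $q_{j,1} < \cdots < q_{j,k_j-1}$ lie in $S_j$, and $a$ is coprime to every prime of $S$. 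The count thus becomes a sum over prime configurations of an inner sieve count on $a$.

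For each fixed configuration, a lower bound on the inner sum comes from the Fundamental Lemma of Sieve Theory~\cite[Theorem 18.11]{DimitrisBook}: provided $\prod_{j,i} q_{j,i} \leq x^{1/2}$ (say), the number of integers $a \leq x/\prod q_{j,i}$ coprime to all primes of $S$ up to a suitable truncation $y$ is $\gg \frac{x}{\prod q_{j,i}} \exp\!\bigl(-\sum_j \E_j(x) + O(1)\bigr)$, via Mertens-type estimates for $\prod_{p\in S_j,\,p\leq y}(1-1/p)$. Summing over configurations, the disjointness of the $S_j$ makes the sum factorize:
\begin{align*}
\sum_{(q_{j,i})} \frac{1}{\prod_{j,i} q_{j,i}} \gg \prod_{j=1}^m \frac{1}{(k_j-1)!}\bigg(\sum_{\substack{p\in S_j\\ p\leq z_j}} \frac{1}{p}\bigg)^{k_j-1},
\end{align*}
for an appropriate choice of truncations $z_j$. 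Mertens-type estimates give each inner sum as at least $(1-o(1))\E_j(x)$, and assembling the pieces yields the claimed bound $x \prod_j \frac{\E_j(x)^{k_j-1}}{(k_j-1)!} e^{-\E_j(x)}$.

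The main obstacle is the simultaneous balancing of two constraints: the sieve inside the inner count forces $\prod q_{j,i}$ to be polynomially small in $x$, while the outer Mertens sums need $z_j$ close enough to $x$ to ensure $\E_j(z_j) \approx \E_j(x)$. The $(1-o(1))^{k_j-1}$ losses from this Mertens replacement must not degrade the bound, which is precisely guaranteed by the subcritical hypothesis $k_j \leq (2-\varepsilon)\E_j(x)$ combined with the largeness condition $\E_j(x) \geq A_\varepsilon$: the former bounds $k_j$ relative to $\E_j(x)$ so that $(1-o(1))^{k_j}$ remains $\Omega_\varepsilon(1)$, while the latter ensures the truncation loss $\E_j(x)-\E_j(z_j)$ can be made a sufficiently small fraction of $\E_j(x)$. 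These are the ``simple modifications'' to Hal\'asz's argument referenced by the authors; once these uniform estimates are in place, the remaining steps are essentially identical to the single-set computation in~\cite{Halasz2}.
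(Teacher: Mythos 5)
Your proposal takes a genuinely different route from the paper. The paper follows Hal\'asz's complex-analytic method: a Buchstab-type iteration reduces $N(\underline k,2x)$ to a tilted Dirichlet series $\sum_{\Omega(n;S_j)=k_j-1}n^{-\sigma}$, which is then expressed as a multi-dimensional Cauchy contour integral of the generating function $F(\underline z,\sigma)$ and evaluated by a saddle-point choice $r_j=k_j/\E_j(v)$; the tail $n>x$ is then controlled by Rankin's trick (comparing the series at $\sigma_y$ and $\sigma_x$ with $y=x^{1/C}$). Your approach is the elementary combinatorial one, closer in spirit to S\'ark\"ozy's original argument.

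However, as sketched your plan has a genuine gap, and the difficulty you identify (the Mertens loss $\E_j(z_j)\approx\E_j(x)$) is not the central one. The real problem is the restriction $\prod_{j,i}q_{j,i}\leq x^{1/2}$ needed by the Fundamental Lemma. If the sets $S_j$ are concentrated on primes close to $x$ — for example a single set $S_1=\{p:x^{1/L}<p\leq x\}$ with $L=\log\log x$, so $\E_1(x)=\log L+O(1)$, $k_1\approx\E_1(x)$ — then under the weight $1/\prod q$ the typical product $\prod q$ is \emph{larger} than $x$, not smaller. Any truncation $q_{j,i}\leq z_j$ that guarantees $\prod q\leq x^{1/2}$ forces $\E_j(z_j)$ to drop by an unbounded amount $\gg\log\log\log x$, and the resulting factor $(\E_j(z_j)/\E_j(x))^{k_j-1}$ decays like a power of $\log\log\log x$, destroying the uniform $\gg$. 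The hypothesis $k_j\leq(2-\varepsilon)\E_j(x)$ does not rescue this: in the paper's proof it is what allows the saddle-point radius $r_j=k_j/\E_j(v)$ to satisfy $r_j\leq 2-\varepsilon$ and hence the Rankin tail comparison $x^{\sigma_x-\sigma_y}\cdot\prod_j\E_j(x)^{k_j-1}e^{-\E_j(x)}/\bigl(\E_j(y)^{k_j-1}e^{-\E_j(y)}\bigr)\to 0$ as $C\to\infty$ — a statement about the weighted tail of the Dirichlet series, not about a naive truncation of the $q$'s. A second, subsidiary gap: the Fundamental Lemma only yields $a$ coprime to $S\cap[1,y]$, so $a$ may still carry primes of $S$ above $y$, inflating $\Omega(n;S_j)$; making this rigorous requires a Buchstab/Bonferroni correction whose size is $\sum_{p\in S,\,y<p\leq X}1/p\approx\log(\log X/\log y)$, which need not be small. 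Both obstacles are exactly what the complex-analytic tilting in the paper handles cleanly, and are the reason the paper's "simple modifications" stay within Hal\'asz's framework rather than replacing it by a sieve decomposition.
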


\begin{proof}
Since $\log n\geq\sum_{p| n}\log p$, for all $u\leq 2x$ we have
\begin{align*}
 N(\underline{k},2x) \geq \sum_{\substack{n\leq u \\ \Omega(n,S_j)\leq k_j\; \forall j}} \frac{\log n }{\log 2x}
\geq \sum_{p\leq u} \frac{\log p}{\log 2x} \sum_{\substack{h\leq u/p \\ \Omega(ph;S_j)\leq k_j\; \forall j}} 1\geq \sum_{p\leq u} \frac{\log p}{\log 2x} N(\underline{k}-\underline{1},u/p),
\end{align*}
with $\underline 1=(1,\dots,1)$. Dividing by $2x$ and integrating over $u\leq 2x$ we then have
\begin{equation}\begin{split}\label{fewx}
N(\underline{k},2x) &\geq  \int_1^{2x} \sum_{p\leq u}  N(\underline{k}-\underline 1,u/p)\frac{\log p}{\log 2x}\,\frac{du}{2x}
=\int_1^{x} \sum_{p\leq 2x/u}p\frac{\log p}{\log 2x} \, N(\underline{k}-\underline{1},u)\,\frac{du}{2x}\\
&\gg \frac1{\log x}\int_1^{x} \frac{x}{u^2} \, N(\underline{k}-\underline{1},u)\,du\geq \frac{x}{\log x}\int_1^{x} \frac{N(\underline{k}-\underline{1},u)}{u^{1+\sigma}}  du
\end{split}\end{equation}
for any $\sigma\geq1$.
By integration by parts, we have 
\begin{align}\label{fewx2}
\sum_{\substack{n\leq  x \\ \Omega(n;S_j) = k_j-1\; \forall j}} \frac{1}{n^{\sigma}}\leq\sum_{\substack{n\leq  x \\ \Omega(n;S_j) \leq k_j-1\; \forall j}} \frac{1}{n^{\sigma}}=\frac{N(\underline{k}-\underline{1},x)}{x^{\sigma}}
+\sigma\int_1^{x} \frac{ N(\underline{k}-\underline{1},u)}{u^{1+\sigma}}\,du
\end{align}
and thus, since $N(\underline{k}-\underline{1},x)\leq N(\underline{k},2x)$, for $x$ sufficiently large~\eqref{fewx}-\eqref{fewx2} yield
\begin{align}\label{dsac}
N(\underline{k},2x)&\gg \frac{x}{\log x}\sum_{\substack{n\leq  x \\ \Omega(n;S_j) = k_j-1 \; \forall j}} \frac{1}{n^{\sigma}}.
\end{align}
Now, let $(r_1,\dots,r_m)\in \mathbb R_{>0}^m$.  Assuming $\sigma>1$, by Cauchy's theorem we have
\begin{align}\label{cint}
\sum_{\substack{n=1 \\ \Omega(n;S_j) = k_j-1\; \; \forall j}}^\infty \frac{1}{n^{\sigma}}=\frac1{(2\pi i)^m}\int_{|z_1|=r_1}\cdots\int_{|z_m|=r_m}\frac{F(\underline{z},\sigma)}{z_1^{k_1}\cdots z_m^{k_m}}\,dz_1\cdots dz_{m}
\end{align}
where for $\underline z=(z_1,\dots,z_m)\in\mathbb C^m$
\begin{align*} 
F(\underline{z},\sigma) := \sum_{n=1}^{\infty} \frac{z_1^{\Omega(n;S_1)}\cdots z_m^{\Omega(n;S_m)}}{n^\sigma} 
= \exp\bigg(\sum_{j=1}^m \sum_{p\in S_j} \sum_{\ell=1}^{\infty}\frac{z_j^\ell}{\ell p^{\ell \sigma}}+\sum_{p\notin \cup_j\!S_j}\sum_{\ell=1}^{\infty}\frac{1}{\ell p^{\ell \sigma}} \bigg)
\end{align*}
where the second expression is obtained by expanding $F$ in its Euler's product. We assume $|z_j|=r_j\leq 2-\varepsilon\; \forall j$ and pick $\sigma=\sigma_v=1+\frac1{\log v}$ with $2\leq v\leq x$. We compare $F(\underline{z},\sigma_v)$ with $F(\underline{r},\sigma_v)$; a simple computation yields 
\begin{align}\label{evr0}
F(\underline{z},\sigma_v)&= F(\underline{r},\sigma_v) \exp\bigg(\sum_{j=1}^m (z_j - r_j) \E_j(v) + O_{\varepsilon}\Big( \sum_{j=1}^m|z_j-r_j|\Big)\bigg)
\end{align}
and\begin{align} \label{evr}
F(\underline{r},\sigma_v) = \zeta(\sigma) \exp\bigg( \sum_{j=1}^m \sum_{p\in S_j\atop p\leq v}\sum_{\ell=1}^{\infty} \frac{r_j-1}{p^{\sigma}}+O(1)\bigg)=e^{\sum_{j=1}^m(r_j-1) \E_j(v)+ O(1)} \log v.
\end{align}
We insert~\eqref{evr0} into~\eqref{cint}. For the main term we evaluate the integrals, and we estimate the contribution of the error choosing $r_j:=k_j/\E_j(v)\leq 2-\varepsilon$ and using the inequality $|e^{z \E_j(v)}|\leq e^{r\E_j(v)}e^{-\theta^2\E_j(v)}$ for $z=re^{i\theta}$, $\theta\in[-\pi,\pi]$. Using also~\eqref{evr} we obtain
\begin{align}
\sum_{\substack{n=1 \\ \Omega(n;S_j) = k_j-1\; \forall j}}^\infty \frac{1}{n^{\sigma_v}}  &=F(\underline{r},\sigma_v)\prod_{j=1}^m\int_{|z_j|=r_j}\frac{\exp\big( (z_j-r_j) \E_j(v)\big)}{z_j^{k_j}}(1+O_{\varepsilon}(|z_j-r_j|))\,dz_j\notag\\
&=(\log v)e^{O(1)}\prod_{j=1}^m \frac{\E_j(v)^{k_j-1}e^{- \E_j(v)} }{(k_j-1)!}(1+O_{\varepsilon}(\E_j(v)^{-1/2})).\label{ftg}
\end{align}
Finally, for $C>2$ we let $y=x^{1/C}$. We have $0\leq \E_j(x)-\E_j(y)\leq\sum_{y<p\leq x}1/p=O(\log C)$ and so $\E_j(x)^{k_j-1}e^{- \E_j(x)} = \E_j(y)^{k_j-1}e^{- \E_j(y)}e^{O(\log C)} $. Thus,
\begin{align*}
\sum_{\substack{n> x \\ \Omega(n;S_j) = k_j-1\; \forall j}} \frac{1}{n^{\sigma_y}}  \leq x^{\sigma_x-\sigma_y}\sum_{\substack{n=1 \\ \Omega(n;S_j) = k_j-1\; \forall j}}^{\infty} \frac{1}{n^{\sigma_{x}}}\sim Ce^{-C+O(\log C)}\log y\prod_{j=1}^m \frac{\E_j(y)^{k_j-1}e^{- \E_j(y)} }{(k_j-1)!}.
\end{align*}
We fix $C$  large enough so that $Ce^{-C+O(\log C)}$ is sufficiently small and deduce by~\eqref{ftg}
\begin{align*}
\sum_{\substack{n\leq  x \\ \Omega(n;S_j) = k_j-1\; \; \forall j}} \frac{1}{n^{\sigma_y}} \gg \sum_{\substack{n=1 \\ \Omega(n;S_j) = k_j-1\; \; \forall j}}^\infty \frac{1}{n^{\sigma_y}}\gg  (\log x)\prod_{j=1}^m \frac{\E_j(y)^{k_j-1}e^{- \E_j(y)} }{(k_j-1)!}
\end{align*}
for $A_\varepsilon$ large enough. The claimed bound then follows by~\eqref{dsac}.
\end{proof}

\section{Proof of Theorem~\ref{mathe}}

Let $y>1$ and $0< B < 1$ be two real parameters. Notice that $1/y$ plays the role of the parameter $\delta$ in the introduction.
 Let us also introduce the notations 
$$E_k := e^{e^{y^{ k}}} \quad \text{ and } \quad D_k := y^{ k}-y^{k-1}, \quad \text{for } k\in\mathbb Z . $$ 
Also, let $c=c(y):=\frac{1}{y}(1-\frac{1}{y})$, so that in particular $D_{k-1}=y^{ k}c$. For the sake of brevity, we denote 
$$ \Omega_{k}(n) := \Omega \big(n;(E_{k-1},E_k]\big)
= \sum_{\substack{ p^{\nu_p}||n \\ E_{k-1} < p \leq E_k}} \nu_p .$$
%
Finally, we introduce the following set:
\begin{align} \label{sdl}
A:=\{ n\in (E_0,\infty) \mid \Omega_{k_n}(n) > \max\{1,BD_{k_n}\} \} \quad \text{ with }\quad k_n := \max\{k\in\Z_{\geq0}\mid E_k< n\} . 
\end{align}

\subsection{The density of \texorpdfstring{$A$}{A} and \texorpdfstring{$A \cdot A$}{A · A}}
As a first step towards Theorem \ref{mathe}, we establish an upper bound for the asymptotic density of the complement of $A$. We recall that $\RR_x$ is as defined in~\eqref{rfrx}.
\begin{proposition}\label{sdlf}
In the above notations, for $x\geq2$ we have
$$ \RR_x(A) 
\ll_y \frac{1}{(\log x)^{\phi(B)c}} . $$
\end{proposition}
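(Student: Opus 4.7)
My plan is to decompose $A^c \cap [1, x]$ according to the value of $k_n$ and apply Lemma~\ref{NewLemma01} slice by slice. Setting $K := \max\{k \geq 0 : E_k < x\}$, so that $y^K < \log\log x \leq y^{K+1}$, I split
\[
A^c \cap (E_0, x] = \bigsqcup_{k=0}^{K} \mathcal N_k, \qquad \mathcal N_k := \bigl\{n \in (E_k, \min(x, E_{k+1})] : \Omega_k(n) \leq \max\{1, BD_k\}\bigr\}.
\]
The piece $A^c \cap [1, E_0]$ has $O_y(1)$ elements, and the finitely many initial $k$ with $BD_k < 1$ contribute $O_y(1)$ via the trivial bound $\#\mathcal N_k \leq E_{k+1}$. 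For the remaining $k$ (where the threshold is just $BD_k$), I would apply Lemma~\ref{NewLemma01} with $U = E_{k-1}$, $V = E_k$. Since $\log(\log V/\log U) = D_k$ and $D_k = c y^{k+1}$, the lemma yields
\[
\#\mathcal N_k \ll_y \min(x, E_{k+1}) \cdot e^{-\phi(B) D_k} = \min(x, E_{k+1}) \cdot (\log E_{k+1})^{-\phi(B) c}.
\]

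The slice $k = K$ gives the main term: $\#\mathcal N_K \ll_y x (\log E_{K+1})^{-\phi(B) c} \leq x (\log x)^{-\phi(B) c}$ since $\log E_{K+1} \geq \log x$ by the definition of $K$. For $k < K$, writing $f(y) := y(\log y)^{-\phi(B) c}$, the bound becomes $\#\mathcal N_k \ll_y f(E_{k+1})$. A short calculation shows that consecutive ratios $f(E_{k+2})/f(E_{k+1})$ are bounded below by a constant $\rho_y > 1$: the double-exponential gap $\log E_{k+2} - \log E_{k+1} = e^{y^{k+1}}(e^{D_{k+2}} - 1)$ dominates the moderate correction $\phi(B) c\, D_{k+2}$, since $e^{y^{k+1}} \geq e$, $e^s - 1 \geq s$, and $\phi(B) c \leq 1/4$. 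Hence the sum is geometrically controlled, $\sum_{k < K} f(E_{k+1}) \ll_y f(E_K)$, and $f(E_K) \leq f(x) = x (\log x)^{-\phi(B) c}$ follows from monotonicity of $f$ on $[e, \infty)$ together with $E_K < x$.

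The main effort goes into the bookkeeping across the doubly-exponential scales $E_k = \exp\exp(y^k)$, in particular verifying the geometric domination of the final slice so that only a constant factor (depending on $y$) is lost in the sum over $k < K$. A secondary concern is that Lemma~\ref{NewLemma01} is stated only in the regime $V \leq \log\log x$, whereas here $V = E_k$ is typically much larger; in practice one would invoke the underlying Halász-type estimate directly, which holds for a substantially wider range of $V$ than the lemma's nominal statement suggests.
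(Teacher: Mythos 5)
Your argument is correct and arrives at the right bound, but it organizes the slicing differently than the paper does. You decompose $A^c\cap(E_0,x]$ into \emph{all} slices $k=0,\dots,K$ and control $\sum_{k<K}f(E_{k+1})$ by a geometric-ratio argument exploiting the double-exponential gaps between the $E_k$. The paper instead writes $x=E_{M-z}$ with $z\in[0,1)$ and cuts the sum into three pieces: $n\le E_{M-2}$, which is bounded trivially since $E_{M-2}=\exp((\log x)^{y^{z-2}})=x^{o(1)}$; the slice $E_{M-2}<n\le E_{M-1}$; and the top slice $E_{M-1}<n\le x$. Only the last two need Lemma~\ref{NewLemma01}, and the delicate case (when $z$ is close to $1$, so that $E_{M-1}$ is close to $x$) is handled there by an explicit case split on $z$. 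What the paper's truncation buys is that the bulk of the geometric sum is absorbed into a single $x^{o(1)}$ term, so no ratio bookkeeping is needed; what your approach buys is a more uniform statement (the penultimate slice $k=K-1$ is bounded directly by $f(E_K)\le f(x)$ via monotonicity of $f$, which is a bit cleaner than the paper's case analysis for $z\to1^-$). Your verification that $f(E_{k+2})/f(E_{k+1})\ge\rho_y>1$ using $e^{y^{k+1}}(e^{D_{k+2}}-1)\ge e\,D_{k+2}>\phi(B)c\,D_{k+2}$ is sound, as is the observation that the finitely many $k$ with $BD_k<1$ and the range $[1,E_0]$ contribute $O_{y,B}(1)$.

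One remark on your ``secondary concern'': the hypothesis $V\le\log\log x$ in Lemma~\ref{NewLemma01} is indeed never satisfied in the applications, but notice that the paper's own proof applies the lemma with, e.g., $V=E_{M-2}$ and the count running over $n\le E_{M-1}$, where $E_{M-2}$ vastly exceeds $\log\log E_{M-1}=y^{M-1}$. So this is a shared feature, not an additional gap in your argument; the stated range on $V$ appears to be a slip, and the intended condition is the much weaker one under which the Halász-type estimate holds (e.g.\ $V\le x$, or $\log V/\log x$ bounded away from $1$). Your proof is therefore on the same footing as the paper's.
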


\begin{proof}
We write $x$ as $x=E_{M-z}$ with $M\in \mathbb N$ and $z\in[0,1)$, so that $\log\log x=y^{M-z}$. We have
\begin{align}\label{wwt}
x \RR_x(A) 
=   \sum_{\substack{ 1\leq n\leq E_{M-2} \\ n\notin A }}1 + \sum_{\substack{E_{M-2}<  n\leq E_{M-1} \\ n\notin A }}1 +   \sum_{\substack{ E_{M-1}<n\leq E_{M-z} \\ n\notin A }}1.
\end{align}
First we note that
\begin{equation*}
E_{M-2} = \exp(e^{y^{M-2}}) = \exp( (\log x)^{y^{z-2}} ) = x^{o(1)},
\end{equation*}
since $z-2<0$ and $y>1$. 
In particular, the first of the three sums in~\eqref{wwt} is negligible.
Moreover, if $E_{M-2}< n \leq E_{M-1}$ then $k_n = M-2$, so by definition of $A$ we have
\begin{equation}\begin{split}\label{23Jan.0}
\sum_{\substack{E_{M-2}<n\leq E_{M-1}\\ n\not\in A }} 1
=\sum_{\substack{E_{M-2}<n\leq E_{M-1}\\ \Omega_{M-2}(n)< BD_{M-2}}} 1 
\ll \frac{E_{M-1}}{e^{\phi(B)D_{M-2}}}= \frac{E_{M-1}}{e^{\phi(B)y^{M-1}c}}
\end{split}\end{equation} 
by Lemma~\ref{NewLemma01}. 
We note that for any $z\in[0,1)$ we have 
\begin{equation}\label{23Jan.1}
\frac{E_{M-1}}{e^{\phi(B)y^{M-1}c}}\ll_y \frac{x}{(\log x)^{\phi(B)c}} .
\end{equation} 
Indeed, for $0\leq z< 1-\frac{1}{\sqrt{\log x}}$ one has
\begin{equation}\notag
\frac{E_{M-1}}{e^{\phi(B)y^{M-1}c}} 
<  E_{M-1} 
= \exp\Big((\log x)^{y^{z-1}}\Big)
< \exp\Big((\log x)^{y^{-1/\sqrt{\log x}}}\Big)
\ll_A \frac{x}{(\log x)^A}
\end{equation} 
for all $A>0$, whereas in the range $1-\frac{1}{\sqrt{\log x}}<z<1$, we write $z=1-O\big(\frac{1}{\sqrt{\log x}}\big)$ and get
\begin{equation}\notag
\frac{E_{M-1}}{e^{\phi(B)y^{M-1}c}} 
\leq  \frac{x}{(\log x)^{\phi(B)c\,y^{z-1}}}
=  \frac{x}{(\log x)^{\phi(B)c+O_y(\frac{1}{\sqrt{\log x}})}}
\ll_y \frac{x}{(\log x)^{\phi(B)c}}  .
\end{equation} 
Then \eqref{23Jan.1} is proven and, together with~\eqref{23Jan.0}, yields that the second sum in~\eqref{wwt} is $O_y (x(\log x)^{-\phi(B)c} ).$
Finally, we deal with the third sum in~\eqref{wwt}. For $n\in(E_{M-1},E_{M-z}]$  we have $k_n = M-1$. Hence, by applying Lemma~\ref{NewLemma01} we obtain
\begin{equation*}
\sum_{\substack{E_{M-1}< n \leq E_{M-z} \\ n\not\in A }} 1
\ll \frac{x}{e^{\phi(B)D_{M-1}}} 
= \frac{x}{(\log x)^{\phi(B) c \, y^z}} 
\ll \frac{x}{(\log x)^{\phi(B)c}}
\end{equation*}
and the proof is completed.
\end{proof}

We now prove an Omega result for the asymptotic density of the complement of $A\cdot A$. 

\begin{proposition}\label{sln}
In the above notations, we have
\begin{align}\label{csdc}
\RR_x(A\cdot A) 
= \Omega_y\bigg(\frac{1}{(\log x)^{\phi(B)y^{-2}+\phi(\min\{1,2B\})c-o(1)}}\bigg).
\end{align}
\end{proposition}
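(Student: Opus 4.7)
The plan is to exhibit, for the sequence $x=E_M$ with $M\to\infty$, a set $T_x\seq[1,x]$ that is disjoint from $A\cdot A$ and satisfies $|T_x|\gg_y x(\log x)^{-\phi(B)y^{-2}-\phi(\min\{1,2B\})c-o(1)}$; this suffices for the claimed $\Omega_y$-bound. Following the heuristic in the introduction, I define
\begin{equation*}
T_x:=\bigl\{m\leq x:\Omega_j(m)\leq K_j\text{ for every }j=0,1,\ldots,M-1\bigr\},
\end{equation*}
with $K_j:=\lfloor\max\{1,BD_j\}\rfloor$ for $0\leq j\leq M-2$ and $K_{M-1}:=\lfloor 2\max\{1,BD_{M-1}\}\rfloor$.

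The first step is to verify $T_x\cap(A\cdot A)=\emptyset$. Suppose for contradiction $m=n_1n_2\in T_x\cap(A\cdot A)$ with $n_1\leq n_2$. Since $n_1,n_2>E_0$, each $n_i$ lies in some box $(E_{k_{n_i}},E_{k_{n_i}+1}]$ with $k_{n_i}\in\{0,\ldots,M-1\}$, and $n_i\in A$ forces $\Omega_{k_{n_i}}(n_i)>\max\{1,BD_{k_{n_i}}\}$. If $k_{n_1}\leq M-2$, then $\Omega_{k_{n_1}}(m)\geq\Omega_{k_{n_1}}(n_1)>\max\{1,BD_{k_{n_1}}\}\geq K_{k_{n_1}}$ contradicts $m\in T_x$; so $k_{n_1}=k_{n_2}=M-1$, and adding the two $A$-inequalities yields $\Omega_{M-1}(m)>2\max\{1,BD_{M-1}\}\geq K_{M-1}$, another contradiction.

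For the lower bound on $|T_x|$, fix small $\varepsilon>0$ and pick $J=J(y,B,\varepsilon)$ large enough that $\E_j(x)\geq A_\varepsilon$ and $K_j=\lfloor BD_j\rfloor$ for all $j\geq J$ (possible since $D_j=y^{j-1}(y-1)$ grows geometrically). Apply Lemma~\ref{NewLemma4} with $S_j:=\{p:E_{j-1}<p\leq E_j\}$ for $J\leq j\leq M-1$ and thresholds $k_j=K_j$ for $j\leq M-2$, $k_{M-1}=\min\{K_{M-1},\lfloor\E_{M-1}(x)\rfloor\}$ (the truncation makes $k_{M-1}/\E_{M-1}(x)\to\min\{1,2B\}$ and keeps $k_{M-1}\leq(2-\varepsilon)\E_{M-1}(x)$ even when $2B\geq 1$). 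This gives a lower bound $\gg_y x\prod_{j=J}^{M-1}\frac{\E_j(x)^{k_j-1}}{(k_j-1)!}e^{-\E_j(x)}$ for the number of $m\leq x$ satisfying the constraints at levels $j\geq J$. By Stirling and $\E_j(x)=D_j+O(1)$, this product equals $(\log x)^{o(1)}\exp\bigl(-\sum_{j=J}^{M-2}D_j\phi(B)-D_{M-1}\phi(\min\{1,2B\})\bigr)$, and the telescoping $\sum_{j=J}^{M-2}D_j=y^{M-2}-y^{J-1}$, combined with $D_{M-1}=y^{M-2}(y-1)$, $\log x=e^{y^M}$, and $c=(y-1)/y^2$, turns the exponent into $y^M[\phi(B)y^{-2}+\phi(\min\{1,2B\})c]+O(1)$. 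After incorporating the small-$j$ constraints ($\Omega_j(m)\leq 1$ for $j<J$), which cost only a $y$-dependent constant factor, one obtains $|T_x|/x\gg_y(\log x)^{-\phi(B)y^{-2}-\phi(\min\{1,2B\})c-o(1)}$, as required.

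The main technical obstacle is the small-$j$ regime, where Lemma~\ref{NewLemma4}'s hypothesis $\E_j(x)\geq A_\varepsilon$ fails. I would resolve it by factoring $m=m_Sm_L$ into its $E_{J-1}$-smooth and $E_{J-1}$-rough parts: the small-$j$ conditions involve only $m_S$ (ranging over a $y$-dependent finite set, with $\sum 1/m_S$ bounded), while the large-$j$ conditions involve only $m_L$ and are handled by a straightforward variant of Lemma~\ref{NewLemma4} restricted to $m_L$ coprime to the primes in $(E_{-1},E_{J-1}]$. A secondary care point is the case $2B\geq 1$, where $\phi(2B)$ is undefined: the truncation $k_{M-1}=\min\{K_{M-1},\lfloor\E_{M-1}(x)\rfloor\}$ forces $k_{M-1}/\E_{M-1}\to 1$, producing the correct $\phi(1)=0$ contribution in the exponent.
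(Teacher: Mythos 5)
Your overall structure mirrors the paper's: you identify a set of integers forced outside $A\cdot A$ by conditions $\Omega_j(m)\leq K_j$, then invoke Lemma~\ref{NewLemma4} to lower-bound its cardinality. Your disjointness verification is correct and matches the paper's two cases (both factors near the top, or one factor in a lower block). The Stirling and telescoping computation for the large-$j$ blocks is also essentially the paper's, and your truncation $k_{M-1}=\min\{K_{M-1},\lfloor \E_{M-1}(x)\rfloor\}$ correctly handles the Lemma's hypothesis $k_j\le(2-\varepsilon)\E_j$ and the $2B\ge1$ case.

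The one place where you diverge, and where your write-up has a genuine gap, is the treatment of the small-$j$ constraints. You propose to factor $m=m_Sm_L$ into smooth and rough parts and to invoke ``a straightforward variant of Lemma~\ref{NewLemma4} restricted to $m_L$ coprime to the primes in $(E_{-1},E_{J-1}]$.'' That variant would impose a condition $\Omega(m_L;S)=0$, i.e.\ $k=0$, which is outside the stated hypotheses of the Lemma ($k_j\geq1$); it is plausible but would need its own proof, and your claim that $m_S$ ranges over a ``finite set'' is also not quite right (arbitrary powers of primes $\leq E_{-1}$ are unrestricted, though $\sum 1/m_S$ does converge). The paper sidesteps all of this much more economically: it replaces the individual conditions $\Omega_j(n)\leq 1$ for all small $j$ by the single \emph{stronger} condition $\Omega(n;(E_0,E_{M-r_0-1}])\leq 1$ on the lumped interval, which is a legitimate restriction to a subset and is a direct application of Lemma~\ref{NewLemma4} with $k=1$ on that block. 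You could do exactly the same with your constant cutoff $J$: replace $\{\Omega_j(m)\leq K_j:j<J\}$ by $\Omega(m;(E_{-1},E_{J-1}])\leq 1$ and apply the Lemma as stated (taking $J$ large enough that $\E\geq A_\varepsilon$ on this block); the cost is a single bounded factor $e^{-\E}\asymp e^{-y^{J-1}}$. With that substitution your argument closes cleanly. Note that the paper instead takes the lumping cutoff near $M$ (so $\E_J\approx y^{M-r_0-1}$) and lets $r_0\to\infty$ slowly to absorb the resulting $e^{-y^{M-r_0-1}}$ loss into $(\log x)^{o(1)}$; your constant-$J$ cutoff is the more natural choice and avoids that extra limit, though the $\prod_j D_j^{O(1)}$ Stirling losses over $\sim M$ blocks still force the $-o(1)$ in the exponent.
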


\begin{proof}
Let $x=E_{M}-1$ for some $M\in\mathbb N$. We will show that~\eqref{csdc} holds with $\gg$ for such values of $x$.

Let $n\in (A\cdot A)\cap[1,x]$. Then, at least one of the following must hold:
\vspace{.5em}
\begin{itemize}
\item[a)]
$\displaystyle n\in \Big(\big(E_{M-1},E_{M}\big)\cap A\Big)\cdot \Big(\big(E_{M-1},E_{M}\big)\cap A\Big)$;
\vspace{.5em}
\item[b)] $n$ has a divisor in $(E_{M-r-1},E_{M-r}]\cap A$ for some $r\in\{1,\dots,M-1\}$.
\end{itemize}
\vspace{.5em}

By definition of $A$, the condition a) forces $\Omega_{M-1}(n)>2BD_{M-1}$, whereas case b) implies $\Omega_{M-r-1}(n)>B D_{M-r-1}$ for some $r\geq 1$. It follows that for $M$ large enough
\begin{align*}
(\mathbb N\cap[1,x])\setminus\big(A\cdot A\big)&\supseteq\bigg\{n\leq x\ \bigg|\ {\Omega_{M-1}(n)\leq 2B D_{M-1},\atop \Omega_{M-r-1}(n)\leq \max\{1,B D_{M-r-1}\}\  \forall r=1,\dots,M-2}\bigg\}.
\end{align*}
We fix $r_0\in[1,M-2]$ and let $J=(E_0,E_{M-r_0-1}]$. Clearly, $\Omega(n;J)=\sum_{r=r_0}^{M-2}\Omega_{M-r-1}(n)$. Thus, we have
\begin{align*}
(\mathbb N\cap[1,x])\setminus\big(A\cdot A\big)&\supseteq\bigg\{n\leq x\ \bigg|\ {\Omega_{M-1}(n)\leq \min\{1,2B\} D_{M-1},\ \ \Omega(n;J)\leq 1,\atop \Omega_{M-r-1}(n)\leq B D_{M-r-1}\  \forall r=1,\dots,r_0-1}\bigg\}.
\end{align*}
The set above is defined by conditions on the prime divisors of $n$ in disjoint intervals. Therefore, we can apply Lemma~\ref{NewLemma4} and obtain
\begin{equation}\begin{split}\notag
\RR_x(A\cdot A)
\gg_{y,r_0} (\log x)^{o(1)} \times  \prod_{j=M-r_0}^{M-2}\frac{D_{j}^{BD_{j}-1}}{[BD_{j}]!}e^{-D_{j}} \times \frac{D_{M-1}^{\min\{1,2B\}D_{M-1}-1}}{[\min\{1,2B\}D_{M-1}]!} e^{-D_{M-1}}  \times e^{-y^{M-r_0-1}}.
\end{split}\end{equation}
By Stirling's approximation formula, for fixed $b$, one has
$$  \frac{D^{bD-1}}{(bD)!}e^{-D}
\gg \frac{D^{bD}e^{-D}}{D\sqrt{D}(bD)^{bD}} 
= \frac{1}{D^{3/2}} e^{-D\phi(b)} .$$
As a consequence,  we have 
\begin{equation}\begin{split}\notag
\RR_x(A\cdot A)
&\gg_{y,r_0} (\log x)^{o(1)} \times  \prod_{j=M-r_0}^{M-2}\frac{1}{D_{j}^{O(1)}} e^{-D_{j}\phi(B)} \times \frac{e^{-D_{M-1}\phi(\min\{1,2B\})-y^{M-r_0-1}}}{D_{M-1}^{O(1)}}  \\
&\gg_y (\log x)^{o(1)} \times  \exp\bigg( - \phi(B) \sum_{j=M-r_0}^{M-2} D_{j} -D_{M-1}\phi(\min\{1,2B\}) -y^{M-r_0-1}\bigg)
\end{split}\end{equation}
since $\prod_{j=M-r_0}^{M-1} D_{j}\ll (\log x)^{o(1)}$. Upon noting that the telescopic sum over $j$ above equals $y^{M-2}-y^{M-r_0-1}$, the above gives
\begin{equation}\begin{split}\notag
\RR_x(A\cdot A)
&\gg_{y,r_0} (\log x)^{o(1)} \times  \exp\Big( - y^M\big( \phi(B) y^{-2}+ c \, \phi(\min\{1,2B\}) +y^{-r_0-1}\big) \Big).
\end{split}\end{equation}
Since $y^M=\log\log x+O(1)$  we then obtain~\eqref{csdc} by letting $r_0\to\infty$ sufficiently slowly.
\end{proof}


\subsection{Proof of Equation \texorpdfstring{\eqref{24Jan.7}}{}}
To establish Theorem \ref{mathe}, we now optimize the choice of parameters involved.
Let us consider $a$ to be in the image of the function $(0,1)\times(1,\infty) \ni (B,y)\mapsto \phi(B)\frac{1}{y} (1-\frac{1}{y})$, i.e. $a\in(0,\frac{1}{4})$. Then, by Proposition~\ref{sdlf} and~\ref{sln} we have
\begin{align}\label{24Jan.6}
\psi(a)\leq F(a),
\end{align}
where 
\begin{align*}
F(a)&= \inf_{\substack{y>1 \\ B\in(0,1)}}\Big\{ \phi(B)y^{-2}+\phi(\min\{1,2B\})y^{-1 }(1-y^{-1}) \ \Big|\  a=\phi(B) y^{- 1}(1-y^{-1}) \Big\}\\
&= \min_{ B,t\in [0,1]}\Big\{ \phi(B)t^2+\frac{\phi(\min\{1,2B\})}{\phi(B)}a \ \Big|\  a=\phi(B)t(1-t) \Big\}.\\
\end{align*}
The equation $a=\phi(B)t(1-t)$ has the unique solutions 
\begin{equation}\label{29Jan.1}
t'_{a,B}=\frac{\phi(B)+\sqrt{\phi(B)^2-4a\phi(B)}}{2\phi(B)} \quad
\text{ and } \quad t_{a,B}=\frac{\phi(B)-\sqrt{\phi(B)^2-4a\phi(B)}}{2\phi(B)}
\end{equation}
in $[0,1]$, and induces the condition $\phi(B)\geq4a$. By monotonicity, one immediately sees that the solution $t_{a,B}$ makes the above minimum smaller (and in fact the solution $t'_{a,B}$ yields the trivial lower bound $F(a)\geq a$).

Now, we have $t_{a,B}^2\phi(B)=G_a(\phi(B))$ with $G_a(x)=\frac{(x-\sqrt{x^2-4a x})^2}{4x}$. Note that $\phi(B)$ is strictly decreasing for $B\in[0,1]$ and $G_a$ is a strictly decreasing function. As a consequence, $t_{a,B}^2\phi(B)$ is strictly increasing for $B$ in its domain, $[0,\phi^{-1}(4a))$. Therefore, since $\frac{\phi(\min\{1,2B\})}{\phi(B)}a=0$ for $B\geq1/2$, the minimum must be attained for $B\leq1/2$. Hence, 
\begin{equation}\begin{split}\label{24Jan.5}
F(a)&= \min_{\substack{ B\in[0,\min\{\frac{1}{2},\phi^{-1}(4a)\}]}}\Big\{ \phi(B)t_{a,B}^2+a\frac{\phi(2B)}{\phi(B)}\Big\}\\
&= \min_{\substack{ B\in [0,\phi^{-1}(4a)]}}\Big\{ \phi(B)t_{a,B}^2+a\frac{\phi(2B)}{\phi(B)} \Big\}
=\min_{\substack{ B\in [0,\phi^{-1}(4a)]}}W(a,B)=K(a)
\end{split}\end{equation}
since $\frac{\phi(2B)}{\phi(B)}$ is also (strictly) increasing on $B\geq1/2$, and where $W$ and $K$ are defined in~\eqref{aewa} and~\eqref{aewa2}. Putting together \eqref{24Jan.6} and \eqref{24Jan.5}, one finally has \eqref{24Jan.7}.


\subsection{Proof of Equation \texorpdfstring{\eqref{thh.01}}{}}
Let $a\in(0,1/4)$. If $\phi^{-1}(4a)>\frac{1}{2}$, i.e. $a<\frac{\phi(1/2)}{4}=\frac{1-\log 2}8=0.0383\dots$,  then we have 
$$K(a)\leq W (a,\tfrac12)=\phi(\tfrac12)t_{a,1/2}^2<\phi(\phi^{-1}(4a))t_{a,\phi^{-1}(4a)}^2=4a/4=a
$$
since $t_{a,B}^2\phi(B)$ is strictly increasing in $B$. Thus, we can assume $a\geq 0.038$.

Now let $\beta=5.3071678 $ and let $B_a$ be such that $\phi(B_a)=\beta a$. Since $1\geq \phi(B_a)=\beta a \geq 4a$, we have $B_a\in[0,\phi^{-1}(4a)]$. Hence,
$$K(a)-a = \min_{B\in[0,\phi^{-1}(4a)]} W(a,B) - a \leq  Q(a),$$
where
\begin{align*}
Q(a) := W(a,B_a)-a&= \frac{(\beta-\sqrt{\beta^2-4\beta})^2}{4\beta}a+\frac{\phi(2B_a)}{\beta}-a.
\end{align*}
Moreover, we have $B_a'=\beta/\phi'(B_a)=\beta/\log B_a$, whence
\begin{align*}
Q'(a)&= \frac{(\beta-\sqrt{\beta^2-4\beta})^2}{4\beta}+2B'_a\frac{\log(2B_a)}{\beta}-1
= \frac{(\beta-\sqrt{\beta^2-4\beta})^2}{4\beta}+ 2\frac{\log(2)}{ \log(B_a)}+1.
\end{align*}
Letting 
$$\eta:=\exp\left(\frac{-8\beta\log 2}{(\beta-\sqrt{\beta^2-4\beta})^2+4\beta}\right),$$ 
we have 
\begin{align*}
Q'(a)<0  \quad &\text{ if }  \eta< B_a <1\\
Q'(a)>0 \quad &\text{ if } 0< B_a<\eta.
\end{align*} 
Equivalently, since $\phi$ is decreasing and $\phi(B_a)=\beta a$, we obtain 
\begin{align*}
Q'(a)<0\quad &\text{ if } 0<a<\tfrac1\beta\phi(\eta)=0.05236391\dots\\
Q'(a)>0 \quad &\text{ if }  \tfrac1\beta\phi(\eta)<a<\tfrac{1}{\beta}=0.1884244\dots
\end{align*} 
Now, since 
$Q(0.11717)=-4.02\dots{}\cdot 10^{-6}$ and $Q(0.02)=-0.011\dots$ are both $<0$,
it follows that 
$$Q(a)\leq\min(Q(0.02),Q(0.11717))<0 \quad \text{ for } a\in[0.02,0.11717],$$ hence the conclusion.

\subsection{Proof of Equation \texorpdfstring{\eqref{thh.02}}{}}
We already know that the $B$ which provides the minimum in $K$ satisfies $B\leq 1/2$. Then, by Equation~\eqref{29Jan.1} we have 
$$t_{a,B}=\frac{1-\sqrt{1-4a/\phi(B)}}2=\frac{a}{\phi(B)}+O(a^2)$$
as $a\to0$, uniformly in $B\in[0,1/2]$. Thus,
\begin{align*}
W(a,B)&=\phi(B)t_{a,B}^2+a\frac{\phi(2B)}{\phi(B)}=\frac{a^2}{\phi(B)}+a\frac{\phi(2B)}{\phi(B)}+O(a^3)\\
&\geq \frac{a^2}{\phi(B)}+O(a^3)\geq \frac{a^2}{\phi(1/2)}+O(a^3).
\end{align*}
Taking $B=1/2$ we have $W(a,1/2)\sim\frac{a^2}{\phi(1/2)}$, and thus we obtain the claimed asymptotic.

\end{document}